\pgfplotsset{width=10cm,compat=1.9}
\newcommand{\Z}{\mathbb{Z}}
\newcommand{\uZ}{\underline{\mathbb{Z}}}
\newcommand{\F}{\mathbb{F}_2}
\newcommand{\uF}{\underline{\mathbb{F}_2}}
\newcommand{\ai}{a_{\lambda_i}}
\newcommand{\az}{a_{\lambda_0}}
\newcommand{\ao}{a_{\lambda_1}}
\newcommand{\anmt}{a_{\lambda_{n-2}}}
\newcommand{\aal}{a_{\alpha}}
\newcommand{\ui}{u_{\lambda_i}}
\newcommand{\uz}{u_{\lambda_0}}
\newcommand{\uo}{u_{\lambda_1}}
\newcommand{\unmt}{u_{\lambda_{n-2}}}
\newcommand{\ual}{u_{\alpha}}
\newcommand{\Si}{\Sigma^{-1}}
\newcommand{\bs}{\bigstar}
\newcommand{\bsp}{\bs_{\perp \lambda_0}}
\newcommand{\Hwt}{\widetilde{H}}
\theoremstyle{plain}
\newtheorem{Thm}{Theorem}
\newtheorem*{Thm*}{Theorem}
\newtheorem{Lem}[Thm]{Lemma}
\newtheorem{Prop}[Thm]{Proposition}
\newtheorem{Cor}[Thm]{Corollary}
\newtheorem{Rem}{Remark}
	\title{The $RO(C_{2^n})$-graded homotopy of $H\underline{\mathbb{F}_2}$}
	\author{Guoqi Yan}
	\address{}
	\email{gyan@nd.edu}
\date{\today}
\begin{document}

\begin{abstract}
 We give an explicit formula for the $RO(C_{2^n})$-graded homotopy of $H\uF$.
\end{abstract}

\maketitle
\tableofcontents

\section{Introduction}
$RO(G)$-graded homotopy groups of genuine equivariant spectra are known to be very difficult to compute. Even in the case of a cyclic group $G=C_{p^n}$, no inductive formula is known. Explicit formulas are only known for $n=2$, see \cite{Zeng17},\cite{NickG} and \cite{Yan22}.

In this paper we provide an explicit inductive formula for $RO(C_{2^n})$-graded homotopy of $H\uF$, where $\uF$ is the constant Mackey functor for $\F$. The main theorem of this paper is Theorem \ref{thm}. There are mainly two reasons for the $RO(C_{2^n})$-graded homotopy of $H\uF$ to be computable: Firstly, we can avoid additive extentions by working over the field $\F$. Secondly, the Mackey functor $\uF$ is constant, which enables us to deduce information from quotient groups.

The result in this paper adds new computations to the database of equivariant computations, where we can say something in general for a particular family of groups (cyclic $2$-groups in this paper). Previously, the only such computations that people know for a certain family of groups, not for a specific group, is the computation of the $RO(C_2^{\times n})$-graded homotopy of $H\uF$, by Holler and Kriz \cite{HollerKriz}.

There could be several applications of our results. In \cite{HK01}, Hu and Kriz explored Real-oriented cohomology theories. The genuine $C_2$-equivariant Steenrod algebra has several remarkable properties, including providing us a genuine $C_2$-equivariant Adams spectral sequence. The results in this paper would be an essential ingredient to the genuine $C_{2^n}$-equivariant Steenrod algebra and Adams spectral sequences. In \cite{CMayBredon},\cite{CMayRep}, Eric Hogle and Clover May discovered the freeness theorems for the group $C_2$ and coefficient $\uF$. Our result will provide information to possible generalizations of the freeness theorems. More generally, Mike Hill \cite{HillFreeness} developed a concept of $R$-free spectrum. For such a $G$-spectrum $E$, its $R$-homology will splits as the $R$-homology of induced representation spheres. When $R=H\uF$, our result will provide complete descriptions of the homology of $H\uF$-free spectra.

The main tool we will use is the following Tate square introduced in \cite{GM95}
\begin{equation}
	\xymatrix{
		H_h\ar[d]_{\simeq}\ar[r] &H\ar[d]\ar[r] &\Hwt\ar[d]\\
		H_h\ar[r] &H^h\ar[r] &H^t
	}.\label{Tate}
\end{equation}
Here $H=H\uF, H_h=EG_{+}\wedge H,H^h=F(EG_+,H),H^t=H^h\wedge\widetilde{EG}$ and $\Hwt=H\wedge\widetilde{EG}$.

Recall the real representation ring of $C_{2^n}$
\[
RO(C_{2^n})=\Z\{1,\alpha,\lambda_{n-2},\cdots,\lambda_1,\lambda_0\},
\]
where $\alpha$ is the one-dimensional sign representation, the $\lambda_k$'s are rotations of the two-dimensional real plane, by $e^{\frac{2\pi i}{2^{n-k}}}$ for each $k$, and $\Z S$ means the free abelian group generated by the set $S$. Thus the stablizer of each non-zero vector of $\lambda_k$ is $C_{2^k}$.

To illustrate the method of inducing from quotient groups, let us suppose that we already know $\pi_{\bs}^{C_{2^{n-1}}} H\F$. Let $\varepsilon:C_{2^n}\to C_{2^n}/{C_2}=C_{2^{n-1}}'$ (here we use $C_{2^{n-1}}'$ to denote the quotient group, to distinguish from the subgroup) be the canonical projection. Pulling back along $\varepsilon$ gives us a map of representation rings
\begin{equation}
    \varepsilon^*:RO(C_{2^{n-1}}')\to RO(C_{2^n})\label{varepsilon}
\end{equation}
where we have $\varepsilon^*(1)=1, \varepsilon^*(\alpha)=\alpha$ and $\varepsilon^*(\lambda_i)=\lambda_{i+1}$. Using the language of \cite[Ch.2]{LMS}, let $U$ be a complete $C_{2^n}$-universe. Let $i:U^{C_2}\to U$ be the inclusion, $\varepsilon^*:Sp^{C_{2^{n-1}}'}U^{C_2}\to Sp^{C_{2^n}}U^{C_2}$ be the functor that regards a $C_{2^{n-1}}'$-spectrum as a $C_{2^n}$-spectrum, then we have the following two adjunctions and an isomorphism:
\begin{equation}
	[i_*\varepsilon^*S^{V},H]^{{C_{2^n}}}_{U}\cong [\varepsilon^*S^{V},i^*H]^{{C_{2^n}}}_{U^{C_2}}\cong [S^{V},(i^*H)^{C_2}]^{{C_{2^{n-1}}'}}_{U^{C_2}}\cong[S^{V},H]^{{C_{2^{n-1}}'}}_{U^{C_2}}.\label{adj}
\end{equation}
Here the first adjunction is change of universe, the second is the pullback and fixed-point adjunction, and the third isomorphism comes from the fact that
\begin{align*}
(i^*H_{C_{2^n}})^{C_2}=H_{C_{2^{n-1}}'}\in Sp^{C_{2^{n-1}}'}{U^{C_2}},
\end{align*}
which is true precisely because $\uF$ is constant. Here we used the notation $H_{K}$ to denote the $K$-equivariant $H\uF$. In (\ref{varepsilon}), the image of $\varepsilon^*$ are the virtual $C_{2^n}$-representations that does not contain $\lambda_0$. If we denote
\begin{equation*}
    \bs_{\perp \lambda_0}=\Z\{1,\alpha,\lambda_{n-2},\cdots,\lambda_1\}\subset RO(C_{2^n})
\end{equation*}
to be the subgroup, then we get
\begin{equation}
    \pi_{\bsp}^{C_{2^n}}H\underset{\cong}{\xleftarrow{\varepsilon^*}} \pi_{\bs}^{C_{2^{n-1}}}H\label{induction}
\end{equation}
from (\ref{adj}). By (\ref{varepsilon}), the classes $\ai,\ui$ on the right will respectively correspond to $a_{\lambda_{i+1}},u_{\lambda_{i+1}}$ on the left, and the classes $\aal,\ual$ on the right will respectively correspond to the elements with the same name on the left. Thus, more explicitly, (\ref{induction}) means
\begin{equation*}
    \pi_{j\aal+i_{n-3}\lambda_{n-2}+\cdots+i_0\lambda_1}^{C_{2^n}}H\underset{\cong}{\xleftarrow{\varepsilon^*}} \pi_{j\aal+i_{n-3}\lambda_{n-3}+\cdots+i_0\lambda_0}^{C_{2^{n-1}}}H\quad\text{for $j,i_k\in\Z$}.
\end{equation*}
We will call (\ref{induction}) the induction formula.

\addtocontents{toc}{\protect\setcounter{tocdepth}{0}}
\subsection*{Notations} We use $\gamma$ to denote the generator of $C_{2^n}$. We use $[x]$ to denote a polynomial generator, and $\langle y\rangle  $ for an additive generator. For simplicity, all super-indices like $i,j,k$ has range $\geq 1$. For example, the notation $\F\langle \Si\frac{1}{\az^i}\rangle  [\ual^{\pm}]\langle 1,\aal\rangle$ means the classes $\Si\frac{1}{\az^i}\ual^s,\Si\frac{1}{\az^i}\ual^s\aal$ for all $i\geq 1,s\in\Z$. $\F\frac{[\ual^{\pm},\uz^{\pm}]}{[\ual,\uz]}$ will mean the quotient as a vector space. For the generators $\ai,\ui,\aal,\ual$, we refer to \cite{HHRa} for details. Our classes are the images of the classes there under the map $H\uZ\to H\uF$ except $\ual$, which is not $\uZ$-orientable but $\uF$-orientable.

\addtocontents{toc}{\protect\setcounter{tocdepth}{1}}
\section{Computations in the second row}
We start the computation from the Borel spectrum $H^h$. $EG$ has free cells in all dimensions, and its cellular chain provides a free $\Z[G]$-resolution of $\Z$. From there we derive the homotopy fixed-point spectral sequence
\begin{Thm} The $RO({C_{2^n}})$-graded homotopy fixed point spectral sequence (HFPSS) for $H\uF$ takes the form \cite[Def 1.4]{Gre18}
\begin{equation}
	E_2^{V,s}=H^s(C_{2^n};\pi_V^{C_{2^n}/e}H)\Rightarrow \pi_{V-s}^{C_{2^n}/C_{2^n}}H^h,\, |d_r|=(r-1,r)\label{HFPSS}
\end{equation}
\end{Thm}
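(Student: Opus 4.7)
The plan is to derive this from the standard skeletal filtration of $EC_{2^n}$ and the induced tower on $H^h$, essentially unwinding the construction of \cite{Gre18} in our notation.

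First, I would fix a $C_{2^n}$-CW structure on $EC_{2^n}$ whose cellular chain complex $C_\bullet(EC_{2^n})$ is a free $\mathbb{Z}[C_{2^n}]$-resolution of $\mathbb{Z}$; the existence of such ``free cells in all dimensions'' is precisely what is invoked above. Applying $F(-,H)$ to the cofiber sequences
\[
(EC_{2^n}^{(s-1)})_{+} \to (EC_{2^n}^{(s)})_{+} \to EC_{2^n}^{(s)}/EC_{2^n}^{(s-1)}
\]
yields a tower of $C_{2^n}$-spectra whose homotopy inverse limit is $F((EC_{2^n})_+, H) = H^h$ and whose $s$-th layer fiber is $F(EC_{2^n}^{(s)}/EC_{2^n}^{(s-1)}, H)$.

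Next, I would identify the $E_1$ page. The equivariant wedge decomposition
\[
EC_{2^n}^{(s)}/EC_{2^n}^{(s-1)} \simeq \bigvee_{I_s} (C_{2^n})_{+} \wedge S^s,
\]
together with the free-orbit adjunction $\pi_*^{C_{2^n}} F((C_{2^n})_+, -) \cong \pi_*^{e}(-)$, gives
\[
E_1^{V,s} \;=\; \pi_{V-s}^{C_{2^n}} F\bigl(EC_{2^n}^{(s)}/EC_{2^n}^{(s-1)}, H\bigr) \;\cong\; \mathrm{Hom}_{\mathbb{Z}[C_{2^n}]}\!\bigl(C_s(EC_{2^n}),\, \pi_V^{C_{2^n}/e} H\bigr),
\]
and the $d_1$ differential is dual to the cellular boundary of $C_\bullet(EC_{2^n})$. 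Taking cohomology of this cochain complex, which is $\mathrm{Hom}_{\mathbb{Z}[C_{2^n}]}$ applied to a free resolution of $\mathbb{Z}$, produces the group cohomology $E_2^{V,s} = H^s(C_{2^n};\,\pi_V^{C_{2^n}/e} H)$ as claimed.

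Finally, the bidegree $|d_r|=(r-1,r)$ is the standard bookkeeping for a tower-associated spectral sequence indexed by filtration $s$ (a $d_r$ raises filtration by $r$ and lowers total stem by $1$). The one step that merits a moment of care is strong convergence; since $\pi_V^{C_{2^n}/e} H\uF = \F$ when $|V|=0$ and zero otherwise, in each fixed $V$-strip the $E_2$ page is supported only when $|V|=0$, and the filtration on $H^h$ is complete and exhaustive, so the Milnor $\lim^1$ obstruction vanishes and the spectral sequence converges strongly to $\pi_{V-s}^{C_{2^n}} H^h$. I do not anticipate a genuine obstacle: the whole theorem is a restatement of \cite[Def 1.4]{Gre18} in the bigrading the rest of the paper will use.
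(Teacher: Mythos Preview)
Your proposal is correct and matches the paper's approach: the paper also derives the spectral sequence from the free cellular filtration of $EC_{2^n}$ (stated just before the theorem) and simply cites \cite[Def 1.4]{Gre18} for the indexing, with strong convergence attributed to \cite[Thm 7.1]{Boa99} rather than argued directly. Your write-up is more detailed than the paper's one-line justification, but the content is the same.
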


This is a spectral sequence of algebras and it collapses at $E_2$ since $\pi_V^{G/e}H$ is concentrated in virtual representations with underlying degree $0$. The strong convergence is guaranteed by \cite[Thm 7.1]{Boa99} and the remark below it. Both the $E_2$ and the target are $\ui,\ual$-local, for all $i$, see \cite{Yan22}. The group cohomology with $\F$-coefficients are
\begin{equation*}
    \begin{aligned}
    &H^*(C_2;\F)=\F[x],|x|=1\quad\text{for $n=1$}\\
    &H^*(C_{2^n};\F)=\F[y]\otimes \Lambda\langle z\rangle  ,|y|=2,|z|=1\quad\text{for $n\geq 2$}
    \end{aligned}
\end{equation*}
where $\Lambda$ denote the exterior algebra. It is easily checked that $x$ converges to $\frac{\aal}{\ual}$ in the first case and $y,z$ converge to $\frac{\az}{\uz},\frac{\aal}{\ual}$ respectively in the second case. Recall the gold relations from \cite[Lem 3.6]{HHRb}
\begin{equation*}
    \begin{aligned}
    &a_{\lambda_i}u_{\lambda_j}=2^{i-j}a_{\lambda_j}u_{\lambda_i},\quad \text{for $n-2\geq i>  j\geq 0$}\\
    &a_{2\alpha}u_{\lambda_j}=2^{n-1-j}a_{\lambda_j}u_{2\alpha},\quad \text{for $n-2\geq j\geq 0$}
    \end{aligned}
\end{equation*}
which holds in the $C_{2^n}$-equivariant homotopy of $H\uZ$. Now since we are working with $H\uF$, these relations in $\pi_{\bs}^{C_{2^n}}H\uF$ become
\begin{equation}
    \begin{aligned}
    &a_{\lambda_i}u_{\lambda_j}=0,\quad \text{for $n-2\geq i>  j\geq 0$}\\
    &a_{2\alpha}u_{\lambda_j}=0,\quad \text{for $n-2\geq j\geq 0$},
    \end{aligned}\label{gold}
\end{equation}
which drastically simplifies our computation. We will refer these relations as the gold relations in this paper. In particular, since $H^h$ is $\ual,\ui$-local for each $i$, we have

\begin{Prop}
The $RO(C_{2^n})$-graded homotopy of $H^h$ is
\begin{equation*}
    \begin{aligned}
        &\pi_{\bs}^{C_{2^n}}H^h=\F[\aal,\ual^{\pm}]\quad\text{if $n=1$}\\
        &\pi_{\bs}^{C_{2^n}}H^h=\F[\aal,\az]/(\aal^2)[\ual^{\pm},\uz^{\pm},\cdots,u_{\lambda_{n-2}}^{\pm}]\quad\text{if $n\geq 2$}.
    \end{aligned}
\end{equation*}
\end{Prop}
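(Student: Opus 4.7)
The plan is to read off $\pi_{\bs}^{C_{2^n}}H^h$ directly from the HFPSS (\ref{HFPSS}), exploiting that the spectral sequence collapses at $E_2$, that working over $\F$ eliminates additive extensions, and that $H^h$ is $\ual,\ui$-local.

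The first step is to identify the $E_2$-page. Because the underlying homotopy $\pi_V^e H\uF$ equals $\F$ exactly when $|V|=0$ and vanishes otherwise, and since the Euler classes $\ual,\uz,\ldots,u_{\lambda_{n-2}}$ become units on the underlying spectrum, recording the $u$-multiplications gives
\[
E_2 \;\cong\; H^*(C_{2^n};\F)\otimes_{\F}\F[\ual^{\pm},\uz^{\pm},\ldots,u_{\lambda_{n-2}}^{\pm}].
\]
Substituting the recalled group cohomology, this is $\F[x,\ual^{\pm}]$ for $n=1$ and $\F[y]\otimes\Lambda\langle z\rangle \otimes\F[\ual^{\pm},\uz^{\pm},\ldots,u_{\lambda_{n-2}}^{\pm}]$ for $n\geq 2$. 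Collapse and the absence of extensions then identify $E_\infty=E_2$ with $\pi_{\bs}^{C_{2^n}}H^h$ as a bigraded $\F$-algebra.

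The second step is to rewrite this ring in the claimed generators. Using the stated convergences $x\leftrightarrow \aal/\ual$ (for $n=1$) and $y,z\leftrightarrow \az/\uz,\,\aal/\ual$ (for $n\geq 2$), I multiply through by the invertible $u$-classes to set $\aal=x\ual=z\ual$ and $\az=y\uz$. For $n=1$ this immediately rewrites $\F[x,\ual^{\pm}]$ as $\F[\aal,\ual^{\pm}]$. For $n\geq 2$, the exterior relation $z^2=0$ becomes $\aal^2=0$, in agreement with the gold relation $a_{2\alpha}=0$ after inverting $\uz$; moreover the remaining Euler classes $a_{\lambda_i}$ for $1\leq i\leq n-2$ vanish in the $u$-inverted ring by the gold relation $a_{\lambda_i}u_{\lambda_j}=0$ with $j<i$, so they contribute no new generators. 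The resulting presentation is exactly $\F[\aal,\az]/(\aal^2)[\ual^{\pm},\uz^{\pm},\ldots,u_{\lambda_{n-2}}^{\pm}]$.

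The main obstacle is careful bookkeeping: matching the abstract cohomology generators $x,y,z$ on the nose with the named Euler/orientation ratios, and confirming that no hidden multiplicative extensions remain once the gold relations are imposed. All analytic content (collapse, strong convergence, no additive extensions) is essentially given, so the argument is structural rather than computational.
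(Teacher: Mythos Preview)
Your argument is correct and follows essentially the same route as the paper: the paper's proof is precisely the discussion preceding the proposition (HFPSS collapse, $u$-locality, identification of $x,y,z$ with the ratios $\aal/\ual,\az/\uz,\aal/\ual$, and use of the gold relations to kill $\aal^2$ and the higher $a_{\lambda_i}$), and you have faithfully expanded those steps. One terminological slip: the $u$-classes are orientation (Thom) classes, not Euler classes; the Euler classes are the $a$-classes.
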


When $n=1$, we have the model $S^{\infty\alpha}\simeq \widetilde{EC_{2}}$, and the maps $H\to \Hwt$ and $H^h\to H^t$ are localizations at $\aal$. When $n\geq 2$, we have the model $S^{\infty\lambda_0}\simeq \widetilde{EC_{2^n}}$. Thus the maps above are localizations at $a_{\lambda_0}$.

\begin{Prop}
The $RO(C_{2^n})$-graded homotopy of $H^t$ is
\begin{equation*}
    \begin{aligned}
    &\pi_{\bs}^{C_{2^n}}H^t=\F[\aal^{\pm},\ual^{\pm}]\quad\text{if $n=1$}\\
    &\pi_{\bs}^{C_{2^n}}H^t=\F[\aal,\az^{\pm}]/(\aal^2)[\ual^{\pm},\uz^{\pm},\cdots,u_{\lambda_{n-2}}^{\pm}]\quad\text{if $n\geq 2$}.
    \end{aligned}
\end{equation*}
\end{Prop}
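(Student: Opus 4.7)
The plan is to exploit the fact, stated just before the proposition, that the map $H^h\to H^t$ is obtained by smashing with $\widetilde{EC_{2^n}}\simeq S^{\infty\alpha}$ (for $n=1$) or $\widetilde{EC_{2^n}}\simeq S^{\infty\lambda_0}$ (for $n\geq 2$). Since $\pi_\bs$ commutes with the sequential colimit defining $S^{\infty\alpha}=\mathrm{colim}\,S^{k\alpha}$ (resp.\ $S^{\infty\lambda_0}=\mathrm{colim}\,S^{k\lambda_0}$), and the transition maps are multiplication by $\aal$ (resp.\ $a_{\lambda_0}$), the induced map on $RO(C_{2^n})$-graded homotopy is algebraic localization:
\begin{equation*}
\pi_{\bs}^{C_{2^n}}H^t \;\cong\; \pi_{\bs}^{C_{2^n}}H^h[\aal^{-1}]\quad(n=1),\qquad \pi_{\bs}^{C_{2^n}}H^t \;\cong\; \pi_{\bs}^{C_{2^n}}H^h[\az^{-1}]\quad(n\geq 2).
\end{equation*}
So the proof reduces to a purely algebraic localization of the rings described in the previous proposition.

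For $n=1$, inverting $\aal$ in $\F[\aal,\ual^{\pm}]$ trivially yields $\F[\aal^{\pm},\ual^{\pm}]$, matching the claim. For $n\geq 2$, I would invert $\az$ in $\F[\aal,\az]/(\aal^2)[\ual^{\pm},\uz^{\pm},\cdots,u_{\lambda_{n-2}}^{\pm}]$. Since $\az$ is a polynomial generator that does not appear in any relation (the gold relations only killed products $a_{\lambda_i}u_{\lambda_j}$ with $i>j$, and all the $u_{\lambda_i}$ are already inverted on the Borel side, while $\aal^2=0$ does not involve $\az$), the localization simply replaces the polynomial in $\az$ by Laurent polynomials, giving $\F[\aal,\az^{\pm}]/(\aal^2)[\ual^{\pm},\uz^{\pm},\cdots,u_{\lambda_{n-2}}^{\pm}]$, as claimed.

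The only point deserving care is the identification of $\pi_{\bs}H^t$ with the localization of $\pi_{\bs}H^h$ at the appropriate $a$-class. The main obstacle, if any, is a small subtlety: one should verify that no $\ui$-localization issue arises, i.e.\ that multiplication by $\az$ (resp.\ $\aal$) is compatible with the already-inverted $u$-classes in the target spectral sequence. This is immediate because $H^h$ is already $\ui$-local and $\ual$-local, as noted in the text, so the Borel target ring is just the $E_2$-page read off from group cohomology and the identification of $y$ and $z$ with $\az/\uz$ and $\aal/\ual$; passing to $H^t$ just inverts the one extra $a$-class corresponding to the chosen model of $\widetilde{EC_{2^n}}$.
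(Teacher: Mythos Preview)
Your argument is correct and is exactly the approach the paper takes: the text immediately preceding the proposition records that $H^h\to H^t$ is localization at $\aal$ (for $n=1$) or $\az$ (for $n\geq 2$), and the proposition is then read off by inverting that class in the ring computed for $H^h$. Your additional remarks on why the localization is algebraically harmless are sound but not strictly needed.
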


Through the connecting homomorphism in the second row of (\ref{Tate}), we deduce

\begin{Cor}
The $RO(C_{2^n})$-graded homotopy of $H_h$ is
\begin{equation*}
    \begin{aligned}
    &\pi_{\bs}^{C_{2^n}}H_h=\F\langle \Si\frac{1}{\aal^i}\rangle  [\ual^{\pm}]\quad\text{if $n=1$}\\
    &\pi_{\bs}^{C_{2^n}}H_h=\F\langle \Si\frac{1}{\az^i}\rangle  [\ual^{\pm},\uz^{\pm},\cdots,u_{\lambda_{n-2}}^{\pm}]\langle 1,\aal\rangle  \quad\text{if $n\geq 2$}.
    \end{aligned}
\end{equation*}
\end{Cor}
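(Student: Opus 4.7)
The plan is to read off $\pi_{\bs}^{C_{2^n}} H_h$ from the fiber sequence $H_h \to H^h \to H^t$ coming from the second row of the Tate square (\ref{Tate}). As noted just above, the right-hand map is localization at $\aal$ when $n=1$ and at $\az$ when $n \geq 2$. First I would use the explicit $\F$-bases of $\pi_{\bs}^{C_{2^n}} H^h$ supplied by the previous Propositions: monomials $\aal^a \ual^s$ with $a \geq 0,\ s \in \Z$ when $n=1$, and monomials $\aal^a \az^b \ual^{s_0}\uz^{s_1}\cdots u_{\lambda_{n-2}}^{s_{n-2}}$ with $a \in \{0,1\},\ b \geq 0,\ s_i \in \Z$ when $n \geq 2$. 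On either basis, multiplication by $\aal$ (respectively $\az$) merely shifts a single index, hence is injective, and therefore $\pi_{\bs}^{C_{2^n}} H^h \to \pi_{\bs}^{C_{2^n}} H^t$ is injective.

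Granting this injectivity, the long exact sequence of $RO(C_{2^n})$-graded homotopy breaks into short exact sequences
\begin{equation*}
0 \to \pi_V H^h \to \pi_V H^t \xrightarrow{\partial} \pi_{V-1} H_h \to 0,
\end{equation*}
so that $\pi_{\bs}^{C_{2^n}} H_h$ is the cokernel of the localization map, shifted down by one in degree (recorded by the macro $\Si = \Sigma^{-1}$). Computing this cokernel is immediate. For $n=1$, inverting $\aal$ adjoins exactly the classes $\aal^{-i}\ual^s$ with $i \geq 1$, yielding $\F\langle \Si\frac{1}{\aal^i}\rangle [\ual^{\pm}]$ after the shift. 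For $n \geq 2$, inverting $\az$ adjoins $\az^{-i}\ual^{s_0}\uz^{s_1}\cdots u_{\lambda_{n-2}}^{s_{n-2}}$ with $i \geq 1$; since $\aal^2 = 0$ but $\aal$ itself is nonzero, the $\aal$-multiples of these classes also appear, giving the stated formula.

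The only real subtlety is the degree-$(-1)$ shift encoded by $\Si$: the symbol $\Si\frac{1}{\aal^i}$ (respectively $\Si\frac{1}{\az^i}$) is the image under the connecting homomorphism $\partial$ of a class in $\pi_V H^t$ and lives in $\pi_{V-1} H_h$, rather than a literal negative power sitting inside $\pi_{\bs}^{C_{2^n}} H^h$. Apart from this bookkeeping, the argument is routine and I do not anticipate any serious obstacle.
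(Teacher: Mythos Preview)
Your proposal is correct and follows essentially the same route as the paper: the paper states only ``Through the connecting homomorphism in the second row of (\ref{Tate}), we deduce'' the result, and you have spelled out precisely that argument---injectivity of the localization map on homotopy, hence $\pi_{\bs}H_h$ is the shifted cokernel.
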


From the above we get
\begin{Cor} We have
\begin{equation*}
    \begin{aligned}
    &\pi_{\bsp}^{C_{2^n}}H_h=\F\langle \Si(\frac{\ual}{\aal})^i\rangle   \quad\text{if $n=1$,}\\
    &\pi_{\bsp}^{C_{2^n}}H_h=\F\langle \Si(\frac{\uz}{\az})^i\rangle  [\ual^{\pm},\uo^{\pm},\cdots,u_{\lambda_{n-2}}^{\pm}]\langle 1,\aal\rangle  \quad\text{if $n\geq 2$}.
    \end{aligned}
\end{equation*}\label{perb}
\end{Cor}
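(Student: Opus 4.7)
The plan is to read this corollary off from the description of $\pi_{\bs}^{C_{2^n}}H_h$ in the preceding corollary, by restricting to those monomials whose degree lies in the subgroup $\bsp\subset RO(C_{2^n})$.

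For $n\geq 2$, I would expand a general basis monomial of $\pi_{\bs}^{C_{2^n}}H_h$ as
\[
m=\Si\cdot\ual^{s_0}\uz^{t_0}\uo^{t_1}\cdots u_{\lambda_{n-2}}^{t_{n-2}}\aal^{\epsilon}\cdot\az^{-i},\qquad i\geq 1,\ s_0,t_k\in\Z,\ \epsilon\in\{0,1\},
\]
and compute the $\lambda_0$-component of $|m|$. Only $\uz$ and $\az$ carry $\lambda_0$-grading, and their exponents enter linearly, so $\lambda_0$-vanishing becomes a single linear condition relating $t_0$ and $i$; with the sign convention for $|u_\lambda|$ used in this paper, it reads $t_0=i$. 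Substituting back bundles $\uz^{t_0}\az^{-i}$ into the single class $(\uz/\az)^i$, while $\ual,\uo,\ldots,u_{\lambda_{n-2}},\aal$ contribute nothing to the $\lambda_0$-grading and so survive as unconstrained free factors; this produces exactly the stated basis.

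The case $n=1$ is strictly parallel, with $\alpha$ playing the structural role that $\lambda_0$ plays when $n\geq 2$. Writing $m=\Si\ual^{s_0}\aal^{-i}$ inside $\pi_{\bs}^{C_2}H_h=\F\langle\Si/\aal^i\rangle[\ual^{\pm}]$, the vanishing of the $\alpha$-coefficient of $|m|$ forces $s_0=i$ and yields the generators $\Si(\ual/\aal)^i$ for $i\geq 1$.

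There is no substantive obstacle. Given the preceding corollary, the content is pure bookkeeping: isolate the monomials of a free graded module whose degree satisfies the single linear condition defining $\bsp$, and regroup the exponents to expose the generator $(\uz/\az)^i$ or $(\ual/\aal)^i$. $\F$-linear independence of the surviving monomials is automatic from the freeness of the ambient module; the only point demanding a bit of care in the write-up is the sign convention for $|u_\lambda|$ and $|\ual|$, so that the repackaging appears with the correct orientation.
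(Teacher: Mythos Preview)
Your argument is correct and is exactly the reasoning the paper leaves implicit: the corollary is stated in the paper with no proof beyond ``From the above we get,'' and what you have written is the obvious restriction of the preceding description of $\pi_{\bs}^{C_{2^n}}H_h$ to monomials whose $\lambda_0$-coefficient (respectively $\alpha$-coefficient when $n=1$) vanishes. Your handling of the $n=1$ edge case, treating $\alpha$ as playing the role of $\lambda_0$, matches how the paper actually uses the notation $\bsp$ in that case (compare the integer-graded computation at the start of Section~3).
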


\section{Sample computations for \texorpdfstring{$n=1,2$}{n12}}
The cases $n=1,2$ are too simple to fit in the general pattern in Theorem \ref{thm}. Thus we record them here. For $n\geq 1$, let $\delta_n$ be the connecting homomorphism
\[
\delta_n:\pi_{\bs}^{C_{2^n}}\Hwt\to \pi_{\bs-1}^{C_{2^n}}H_h.
\]
We use $K_n$ and $C_n$ to denote the kernel and cokernel respectively. 

Now we start with $n=1$, then
\[
\pi_{*}^{C_{2}}H_h=\F\langle \Si(\frac{\ual}{\aal})^i\rangle  
\]
and $\pi_{*}^{C_{2}}H=\F\langle 1\rangle  $. Since $H\to \Hwt$ is a ring map, the identity $1\in\pi_{0}^{C_2}H$ should map to the identity, thus not killed by elements from $\pi_{*}^{C_{2}}H_h$. We get $\pi_{*}^{C_{2}}\Hwt=\F[\frac{\ual}{\aal}]$ and thus
\begin{equation*}
    \pi_{\bs}^{C_{2}}\Hwt=\F[\ual,\aal^{\pm}]
\end{equation*}
by $\aal$-periodicity. The connecting homomorphism $\delta_1:\pi_{\bs}^{C_{2}}\Hwt\to \pi_{\bs-1}^{C_{2}}H_h$ maps
\[
\F[\ual]\langle \aal^{-i}\rangle  \mapsto \F[\ual]\langle \Si\aal^{-i}\rangle  .
\]
We get
\begin{equation*}
    \begin{aligned}
        \pi_{\bs}^{C_{2}}H=K_1\oplus C_1&=\F[\aal,\ual]\\
        &\oplus \F\langle \Si\aal^{-i}\ual^{-j}\rangle  .
    \end{aligned}
\end{equation*}

When $n=2$, we have
\begin{equation*}
    \pi_{\bsp}^{C_{4}}H_h=\F\langle \Si(\frac{\uz}{\az})^i\rangle  [\ual^{\pm}]\langle 1,\aal\rangle  
\end{equation*}
From the induction formula (\ref{induction}) we get
\begin{equation}
    \begin{aligned}
        \pi_{\bsp}^{C_{4}}H&=\F[\aal,\ual]\\
        &\oplus \F\langle \Si\aal^{-i}\ual^{-j}\rangle  .
    \end{aligned}
\end{equation}
The map $\pi_{\bsp}^{C_{4}}H_h\to \pi_{\bsp}^{C_{4}}H$ is $0$. To prove it we need a lemma

\begin{Lem}
For $n\geq 1$, and all the homotopy Mackey functor we discuss in this paper, $ker(\aal)=im(tr^{C_{2^n}}_{C_{2^{n-1}}})$ and $im(\aal)=ker(^{C_{2^n}}_{C_{2^{n-1}}})$.\label{lemrestr}
\end{Lem}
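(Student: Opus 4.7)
The plan is to deduce both equalities uniformly from the Puppe long exact sequence attached to the cell structure $S^\alpha = D(\alpha)/S(\alpha)$. Every homotopy Mackey functor appearing in the paper has the form $\underline{\pi}_V(E)$ for some $C_{2^n}$-spectrum $E$ among $H, H_h, H^h, \Hwt, H^t$, so a single cofiber-sequence argument will cover all cases simultaneously.

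Since $S(\alpha)=C_{2^n}/C_{2^{n-1}}$, we have the cofiber sequence of pointed $C_{2^n}$-spaces
\[
(C_{2^n}/C_{2^{n-1}})_+ \xrightarrow{f} S^0 \xrightarrow{\aal} S^\alpha.
\]
Smashing with $E$ and applying $\pi_V^{C_{2^n}}$, combined with the Wirthm\"uller identification $\pi_V^{C_{2^n}}(E\wedge (C_{2^n}/C_{2^{n-1}})_+)\cong \pi_V^{C_{2^{n-1}}} E$ and the fact that $\alpha|_{C_{2^{n-1}}}=1$, produces the long exact sequence
\[
\cdots \to \pi_V^{C_{2^{n-1}}} E \xrightarrow{\mathrm{tr}} \pi_V^{C_{2^n}} E \xrightarrow{\aal\cdot} \pi_{V-\alpha}^{C_{2^n}} E \xrightarrow{\partial} \pi_{V-1}^{C_{2^{n-1}}} E \to \cdots.
\]
Under the standard characterization of $\mathrm{tr}^{C_{2^n}}_{C_{2^{n-1}}}$ via the stable collapse $f$, the first map is precisely the transfer; exactness at $\pi_V^{C_{2^n}} E$ then immediately gives $\ker(\aal)=\mathrm{im}(\mathrm{tr})$.

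To finish, one identifies the connecting map $\partial$ with the restriction $\mathrm{res}^{C_{2^n}}_{C_{2^{n-1}}}$. The rotated Puppe map $g:S^\alpha\to\Sigma(C_{2^n}/C_{2^{n-1}})_+$, when restricted to $C_{2^{n-1}}$ (where $\alpha$ trivializes to the real line), becomes the pinch map $S^1\to S^1\vee S^1$; tracing through the Wirthm\"uller equivalence shows that this pinch map is precisely the one inducing the restriction on homotopy groups. Granted this identification, exactness at $\pi_{V-\alpha}^{C_{2^n}} E$ yields $\mathrm{im}(\aal)=\ker(\mathrm{res})$. The delicate point is the identification $\partial=\mathrm{res}$, which is a standard but nontrivial compatibility in equivariant stable homotopy theory; once it is granted, the lemma is a direct appeal to exactness.
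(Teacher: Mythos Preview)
Your argument is correct and is essentially the paper's own proof. Both exploit the cofiber sequence $(C_{2^n}/C_{2^{n-1}})_+\to S^0\xrightarrow{\aal}S^\alpha$; the paper applies $[-,S^{-V}\wedge H\uF]^{C_{2^n}}$ (cohomology) to this sequence and to its Spanier--Whitehead dual, so that the collapse map and its dual give $\mathrm{res}$ and $\mathrm{tr}$ directly in the two resulting long exact sequences, whereas you apply $\pi_V^{C_{2^n}}(E\wedge -)$ (homology) to the single sequence and instead identify the Puppe connecting map with $\mathrm{res}$. These are two sides of the same coin: the identification $\partial=\mathrm{res}$ that you flag as ``delicate'' is precisely the Wirthm\"uller self-duality of $(C_{2^n}/C_{2^{n-1}})_+$ that the paper invokes implicitly when passing to the Spanier--Whitehead dual. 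Your formulation has the small advantage of applying uniformly to any $E$, while the paper's version as written literally treats only $H\uF$.
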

\begin{proof}
Apply $[-,S^{-V}\wedge H\uF]^{C_{2^n}}$ to the following cofiber sequence and its Spanier-Whitehead dual
\begin{equation*}
	S^{-1}\xrightarrow{\aal}S^{\alpha-1}\to {C_{2^n}}/{C_{2^{n-1}}}_+\xrightarrow{res}S^0\xrightarrow{\aal}S^{\alpha}.
\end{equation*}
\end{proof}
Now for degree reason, using divisibility and linearity, we only need to consider the possibility
\[
\Si (\frac{\uz}{\az})\ual^{-3}\aal\xrightarrow{?}\Si\aal^{-1}\ual^{-1}.
\]
Now both classes are killed by $\aal$, thus hit by $tr^{C_4}_{C_2}$. They both live in degree $\pi_{2-2\alpha}$, and $\ual$ is a unit on the $C_2$-level. Thus the problem reduces to whether the map $\pi_0^{C_2}H_h\to \pi_{0}^{C_2}H$ is $0$ or not. We know from the $C_2$ case that this map is $0$. We conclude that the map $\pi_{\bsp}^{C_{4}}H_h\to \pi_{\bsp}^{C_{4}}H$ is $0$. Then we have
\begin{equation*}
    \pi_{\bs}^{C_{4}}\Hwt=(\Sigma\pi_{\bsp}^{C_{4}}H_h)[\az^{\pm}]\oplus \pi_{\bsp}^{C_{4}}H[\az^{\pm}].
\end{equation*}
by $\az$-periodicity of $\Hwt$. For the connecting homomorphism, we first look at classes in $(\Sigma\pi_{\bsp}^{C_{4}}H_h)[\az^{\pm}]$. We get
\begin{equation*}
\F\langle \uz^i\rangle  [\az][\ual^{\pm}]\langle 1,\aal\rangle  \in K_2,
\end{equation*}
and quotient by classes of $(\Sigma\pi_{\bsp}^{C_{4}}H_h)[\az^{\pm}]$ leave us with
\begin{align}
\label{C4eq1}
&\F\langle \Si\az^{-i}\rangle  [\ual^{\pm}]\langle 1,\aal\rangle  \\
\label{C4eq2}
\oplus&\F\langle \Si\az^{-i}\uz^{-j}\rangle  [\ual^{\pm}]\langle 1,\aal\rangle  
\end{align}
in $C_2$. Elements in (\ref{C4eq2}) cannot be hit under $\delta_2$ because of the negative powers of $\uz$. The classes
\[
\F\langle \Si\az^{-i}\rangle  [\ual]\langle 1,\aal\rangle  
\]
are hit under $\delta_2$ by classes from $\pi_{\bsp}^{C_{4}}H[\az^{\pm}]$, and we are left with
\[
\F\langle \Si\az^{-i}\rangle  \frac{[\ual^{\pm}]}{[\ual]}\langle 1,\aal\rangle  \in C_2,
\]
and
\begin{equation*}
    \begin{aligned}
        &\F[\aal,\ual][\az]\\
        \oplus&\F[\ual]\langle \frac{\aal^{i+1}}{\az^j}\rangle  \\
        \oplus&\langle \Si\aal^{-i}\ual^{-j}\rangle  [\az^{\pm}]
    \end{aligned}
\end{equation*}
in $K_2$. In summary, we have
\begin{equation}
    \begin{aligned}
        \pi_{\bs}^{C_4}H=&\F[\aal,\ual,\az,\uz]/(\aal^2\uz)\\
        \oplus&\F\langle \uz^i\rangle  [\az]\frac{[\ual^{\pm}]}{[\ual]}\langle 1,\aal\rangle  \\
        \oplus&\F[\ual]\langle \frac{\aal^{i+1}}{\az^j}\rangle  \\
        \oplus&\F\langle \Si\aal^{-i}\ual^{-j}\rangle  [\az^{\pm}]\\
        \oplus&\F\langle \Si\az^{-i}\uz^{-j}\rangle  [\ual^{\pm}]\langle 1,\aal\rangle  \\
        \oplus&\F\langle \Si\az^{-i}\rangle  \frac{[\ual^{\pm}]}{[\ual]}\langle 1,\aal\rangle  .
    \end{aligned}\label{C4}
\end{equation}

\section{Statement and proof of the main theorem}
For $x\in\pi_{\bs}^{C_{2^n}}H$, let $D(x,n)$ be the set of classes in $\pi_{\bs}^{C_{2^n}}H$ which are infinitely divisible by $x$. Equivalently, it is the kernel of the algebraic completion map $\pi_{\bs}^{C_{2^n}}H\to (\pi_{\bs}^{C_{2^n}}H)^{\wedge}_x$. For example, $D(\aal,1)$ is the set of classes
\[
\F\langle \Si\aal^{-i}\ual^j\rangle  .
\]
$D(\az,2)$ include the last four summands in (\ref{C4}) as well as the classes $\F[\ual,\az]\langle \aal^i\rangle  _{i\geq 2}$ in the positive cone.
\begin{Thm}
    For $n\geq 3$, the $RO(C_{2^n})$-graded homotopy of $H\uF$ has the structure as follows. It is the direct sum of the following summands (we refer to them as part $(1)-(4)$)
    \begin{enumerate}
        \item The positive cone, denoted by $\pi_{pos}^{C_{2^n}}H$;
        \item $D(\ao,n)-\pi_{pos}^{C_{2^n}}H$;
        \item $D(\az,n)-D(\ao,n)-\pi_{pos}^{C_{2^n}}H$;
        \item The classes that are not in $D(\az,n)$.
    \end{enumerate}
    There are $n^2+2n-2$ summands in the above presentation. Here the minus sign means taking the difference of the sets involved, where we identify vector spaces with their sets of basis. Each part has explicit formulas as follows. For a closed formula of part $(2)$, see Remark \ref{Rem2}.
    \begin{enumerate}
        \item The positive cone of $\pi_{\bs}^{C_{2^n}}H$ is
    \begin{equation*}        \pi_{pos}^{C_{2^n}}H=\F[\az,\ao,\cdots,\anmt,\aal,\uz,\uo,\cdots,\unmt,\ual]/(a_{\lambda_i}u_{\lambda_j}=0,\text{for $n-1\geq i\rangle  j\geq 0$}).
    \end{equation*}
    Here for simplicity we used the notation $a_{\lambda_{n-1}}=a_{2\alpha}$ in the gold relation.\\
    \item $D(\ao,n)$ can be computed inductively as $D(\ao,n)=\varepsilon^*(D(\az,n-1))[\az^{\pm}]$. Here $\varepsilon^*$ is the map in (\ref{induction}) which identifies $\pi_{\bs}^{C_{2^{n-1}}}H$ as a subset of $\pi_{\bsp}^{C_{2^n}}H$ by renaming.\\
    \item The following $2n$ summands consists of the direct sum of the following $3$ blocks
\begin{equation*}
    \begin{aligned}
        B_1&=\F[\ual]\langle \frac{\aal^{i+1}}{\az^j}\rangle  \\
        &\oplus \F\langle \az^{-i}\rangle  (\frac{[\ao,\cdots,\anmt]}{\langle 1\rangle  }[\uo,\cdots,\unmt][\aal])/\{\text{gold relations}\},\\
        B_2&=\F\langle \Si\az^{-i}\rangle  \langle 1,\aal\rangle  \langle \uz^{-j}\rangle  [{u_{\lambda_1}}^{\pm},\cdots,\unmt^{\pm},\ual^{\pm}]\\
        &\oplus\F\langle \Si\az^{-i}\rangle  \langle 1,\aal\rangle  \langle \uo^{-j}\rangle  [{u_{\lambda_2}}^{\pm},\cdots,\unmt^{\pm},\ual^{\pm}]\\
        &\oplus\cdots\\
        &\oplus\F\langle \Si\az^{-i}\rangle  \langle 1,\aal\rangle  \langle \unmt^{-j}\rangle  [\ual^{\pm}]\\
        &\oplus\F\langle \Si\az^{-i}\rangle  \langle 1,\aal\rangle  \langle \ual^{-j}\rangle  \\
        B_3&=\F\frac{[\ual^{\pm}]}{[\ual]}\langle \unmt^i\rangle  \frac{[\ao]}{\langle 1\rangle  }[\az^{\pm}]\langle 1,\aal\rangle  \\
        &\oplus \F\frac{[u_{\lambda_{n-2}}^{\pm},\ual^{\pm}]}{[u_{\lambda_{n-2}},\ual]}\langle u_{\lambda_{n-3}}^i\rangle  \frac{[\ao]}{\langle 1\rangle  }[\az^{\pm}]\langle 1,\aal\rangle  \\
        &\oplus \cdots\\
        &\oplus \F\frac{[\unmt^{\pm},\cdots,u_{\lambda_2}^{\pm},\ual^{\pm}]}{[\unmt,\cdots,u_{\lambda_2},\ual]}\langle \uo^i\rangle  \frac{[\ao]}{\langle 1\rangle  }[\az^{\pm}]\langle 1,\aal\rangle  .
    \end{aligned}
\end{equation*}
    \item The following $(n-1)$ summands,
\begin{equation*}
    \begin{aligned}
        &\F\frac{[\ual^{\pm}]}{[\ual]}\langle \unmt^i\rangle  [\az]\langle 1,\aal\rangle  \\
        \oplus&\F\frac{[\ual^{\pm},\unmt^{\pm}]}{[\ual,\unmt]}\langle u_{\lambda_{n-3}}^i\rangle  [\az]\langle 1,\aal\rangle  \\
        \oplus&\cdots\\
        \oplus&\F\frac{[\ual^{\pm},\unmt^{\pm},\cdots,\uo^{\pm}]}{[\ual,\unmt,\cdots,\uo]}\langle \uz^i\rangle  [\az]\langle 1,\aal\rangle  .
    \end{aligned}
\end{equation*}
    \end{enumerate}\label{thm}
\end{Thm}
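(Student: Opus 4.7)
The plan is to prove Theorem \ref{thm} by induction on $n$, with $n=1,2$ serving as base cases established in the previous section. The inductive step runs the same four-phase strategy used for $n=2$: traverse the left column of the Tate square (\ref{Tate}), namely the cofiber sequence $H_h \to H \to \Hwt$, using the induction formula (\ref{induction}) to transport the $C_{2^{n-1}}$-answer into $\bsp$-graded degrees and then exploiting $\az$-periodicity of $\Hwt$ to extract the full $\bs$-graded answer via the connecting homomorphism $\delta_n$.

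First I apply (\ref{induction}) to identify $\pi_{\bsp}^{C_{2^n}}H \cong \pi_{\bs}^{C_{2^{n-1}}}H$ via $\varepsilon^*$; since $\varepsilon^*(\az)=\ao$, the inductive description of $D(\az,n-1)$ at the lower group yields, after $\az$-periodization in the third phase, the recursion $D(\ao,n)=\varepsilon^*(D(\az,n-1))[\az^{\pm}]$ of part (2). Second, the cofiber sequence produces a long exact sequence in $\pi_{\bsp}$; the crucial input is the vanishing of $\pi_{\bsp}^{C_{2^n}}H_h\to\pi_{\bsp}^{C_{2^n}}H$. Degree considerations combined with the gold relations (\ref{gold}) reduce to showing the image lies among $\aal$-torsion targets, which by Lemma \ref{lemrestr} are transfers from $C_{2^{n-1}}$; commuting the transfer past $\ual$-localization reduces the claim to the analogous $C_{2^{n-1}}$-statement known by induction. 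This gives
\[
\pi_{\bsp}^{C_{2^n}}\Hwt \cong \pi_{\bsp}^{C_{2^n}}H \oplus \Sigma\pi_{\bsp}^{C_{2^n}}H_h
\]
as $\F$-vector spaces.

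Third, $\Hwt\simeq H\wedge S^{\infty\lambda_0}$ (for $n\geq 2$) yields $\pi_{\bs}^{C_{2^n}}\Hwt=\pi_{\bsp}^{C_{2^n}}\Hwt[\az^{\pm}]$, and the final long exact sequence realizes $\pi_{\bs}^{C_{2^n}}H$ as $K_n\oplus C_n$ with $K_n=\ker\delta_n$ and $C_n$ the $\Si$-shifted cokernel of $\delta_n$. The map $\delta_n$ annihilates the $\Sigma\pi_{\bsp}^{C_{2^n}}H_h[\az^{\pm}]$-summand and all nonnegative-$\az$-power classes of the $\pi_{\bsp}^{C_{2^n}}H[\az^{\pm}]$-summand, and carries a negative-$\az$-power class $\az^{-i}x$ to $\Si\az^{-i}\cdot(\text{image of } x)$ in $\pi_{\bsp-1}^{C_{2^n}}H_h$ read from Corollary \ref{perb}. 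Fourth, sorting $K_n\oplus C_n$ by divisibility by $\az$ and $\ao$ produces the four parts: classes nonnegative in every $\ai$ form the positive cone (1); classes infinitely $\ao$-divisible but not in (1) form part (2); the remaining $\az$-infinitely-divisible classes split into blocks $B_1,B_2,B_3$ of part (3) according to whether they come from positive-$\az$-power kernel classes ($B_1$), cokernel classes carrying a $\uz^{-1}$ factor ($B_2$), or cokernel classes with mixed negative $u_{\lambda_k}$-powers but no $\uz^{-1}$ ($B_3$); the classes outside $D(\az,n)$ give the $(n-1)$ summands of part (4).

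The main obstacle is the bookkeeping in the fourth phase. The gold relations (\ref{gold}) create many syzygies among the $\ai\ui$-monomials, and their interaction with $\az$-periodization and $\ui$-localization is what forces the elaborate quotient vector spaces $[\ual^{\pm},\unmt^{\pm},\ldots,u_{\lambda_{k+1}}^{\pm}]/[\ual,\unmt,\ldots,u_{\lambda_{k+1}}]$ appearing in block $B_3$ and part (4). Verifying that the decomposition aligns exactly with $D(\az,n)$, $D(\ao,n)$, the positive cone, and their complement --- and that the recursion $D(\ao,n)=\varepsilon^*(D(\az,n-1))[\az^{\pm}]$ holds on the nose --- is the combinatorial heart of the proof; the homotopy-theoretic input in Phases 1--3 is essentially the same mechanism as for $n=2$.
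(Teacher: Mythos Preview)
Your overall strategy---induct on $n$, use the Tate square, establish vanishing of $\pi_{\bsp}^{C_{2^n}}H_h\to\pi_{\bsp}^{C_{2^n}}H$, then analyze $\delta_n$---matches the paper's. The real gap is in your vanishing argument. Your one-line sketch (``reduce to $\aal$-torsion targets, invoke Lemma~\ref{lemrestr}, descend to $C_{2^{n-1}}$'') does not go through as stated: after the degree reduction the relevant target class in $\pi^{C_{2^n}}_{(i+1)\lambda_1-3}H$ is $\Si\ao^{-i}\uo^{-1}$, which carries a nonzero $\aal$-multiple $\Si\ao^{-i}\uo^{-1}\aal$ and is therefore \emph{not} $\aal$-torsion, so Lemma~\ref{lemrestr} does not apply to it. The paper does not use a transfer argument here; instead it computes the Mackey functor $\underline{\pi^{C_{2^n}}_{(i+1)\lambda_1-3}}H$ explicitly from the cellular chain of $S^{2\lambda_1}$, finds that all restrictions from the $C_{2^n}$-level down to the $C_4$-level are isomorphisms, and only then reduces to the $n=2$ base case. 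This explicit Mackey-functor computation is the actual crux of the vanishing step, and your proposal omits it.

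There are also errors in your Phases~3--4. You claim $\delta_n$ annihilates the whole $(\Sigma\pi_{\bsp}^{C_{2^n}}H_h)[\az^{\pm}]$ summand, but a class $\uz^i\az^{k}x$ there with $k<0$ maps nontrivially to $\Si\uz^i\az^{k}x$ in $H_h$; only the $k\geq 0$ portion lies in $K_n$, and the leftover cokernel yields the first summand of $B_2$ together with the classes in~(\ref{firstline}). Your sorting of the blocks is likewise off: $B_1$ consists of \emph{kernel} classes with \emph{negative} $\az$-powers (they survive because $\ai$ and $a_{2\alpha}$ vanish in $H_h$), not positive-$\az$-power kernel classes; and $B_3$ arises from $K_n$---specifically from the $\ao$-positive part of the renamed part~(4) for $C_{2^{n-1}}$---not from the cokernel.
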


\begin{Rem}
Each summand in part $(4)$ has to modulo the non-negative powers of the $u$'s since they are included in the positive cone. Also notice that the $n=2$ case can also fit in our theorem if we regard the second summand of $B_1$ as $0$.
\end{Rem}

\begin{Rem}
Part $(2)$ can also be expressed as closed formulas. For each $n$, $D(\az,n)$ consists of part $(2),(3)$ and $\az^m$ times of elements in $B_1$ for suitable $m$ such that the resulting elements live in the positive cone. These are the elements that will make up part $(2)$ (and part of $\pi_{pos}^{C_{2^{n+1}}}$) for the group $C_{2^{n+1}}$. If we use ${Part}_{(i)}^{C_{2^n}},i=2,3$ to denote the part $(2)$ and $(3)$ for $C_{2^n}$. Then the closed formula will be
\begin{equation*}
    \begin{aligned}
        {Part}_{(2)}^{C_{2^n}}=&((\varepsilon^*)^{n-3}{Part}_{(2)}^{C_8})[a_{\lambda_{n-4}}^{\pm},\cdots,\az^{\pm}]\\
        \oplus&((\varepsilon^*)^{n-3}{Part}_{(3)}^{C_8})[a_{\lambda_{n-4}}^{\pm},\cdots,\az^{\pm}]\\
        \oplus&((\varepsilon^*)^{n-4}{Part}_{(3)}^{C_{16}})[a_{\lambda_{n-5}}^{\pm},\cdots,\az^{\pm}]\\
        \oplus&\cdots\\
        \oplus&((\varepsilon^*)^{2}{Part}_{(3)}^{C_{2^{n-2}}})[a_{\lambda_{1}}^{\pm},\az^{\pm}]\\
        \oplus&((\varepsilon^*){Part}_{(3)}^{C_{2^{n-1}}})[\az^{\pm}]
    \end{aligned}\label{Rem2}
\end{equation*}
for $n\geq 4$. Here $(\varepsilon^*)^i$ means the iteration of the renaming process.
\end{Rem}

We will prove the theorem by induction. First recall the definition of the positive cone
\begin{equation*}
    \pi_{pos}^{C_{2^n}}H=\underset{n\in\Z,m_i\geq 0}{\oplus}\pi^{C_{2^n}}_{n-m_0\lambda_0-m_1\lambda_1-\cdots-m_{n-2}\lambda_{n-2}-m_{n-1}\alpha}H.
\end{equation*}
It is a subring of $\pi_{\bs}^{C_{2^n}}H$ and has a particularly easy description. The entire homotopy group is an algebra over this subring.

\begin{Lem}
We have $D(\ao,n)\subset D(\az,n)$.
\end{Lem}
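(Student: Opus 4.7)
The plan is to combine the vanishing $\aal^2 = 0$ in $\pi_\bs^{C_{2^n}} H^h$ (recorded in the second Proposition above for $n \geq 2$) with the standard Tate-square identification $\mathrm{cof}(H \to H^h) \simeq \mathrm{cof}(\Hwt \to H^t)$, so that any class infinitely divisible by $\aal$ gets lifted to an $\az$-local piece, where divisibility by arbitrary powers of $\az$ is automatic.

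First, if $y \in D(\aal, n)$ then in particular $y = \aal^2 z$ for some $z \in \pi_\bs^{C_{2^n}} H$. Since $\aal^2 = 0$ already in $\pi_\bs^{C_{2^n}} H^h$, applying the map $H \to H^h$ kills $y$. Setting $Q := \mathrm{cof}(H \to H^h)$, the long exact sequence of $H \to H^h \to Q$ then produces $\tilde y \in \pi_{\bs + 1}^{C_{2^n}} Q$ with $\partial(\tilde y) = y$, where $\partial$ is the connecting homomorphism. Because $H \to H^h \to Q$ is a cofiber sequence of $H$-modules, $\partial$ is $\pi_\bs^{C_{2^n}} H$-linear; in particular it commutes with multiplication by $\az$.

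The main step, and essentially the only nontrivial one, is to verify that $Q$ is $\az$-local. I would apply the $3 \times 3$ lemma to the Tate square (\ref{Tate}): since its left column is the identity, the induced map on cofibers yields an equivalence $Q \simeq \mathrm{cof}(\Hwt \to H^t)$. On the other hand, the map $\Hwt \to H^t$ is obtained from $H \to H^h$ by smashing with $\widetilde{EG}$, so $\mathrm{cof}(\Hwt \to H^t) \simeq Q \wedge \widetilde{EG}$. For $n \geq 2$ the identification $\widetilde{EG} \simeq S^{\infty\lambda_0}$ already used in Section 2 means $-\wedge \widetilde{EG}$ is $\az$-localization, so the chain of equivalences $Q \simeq Q \wedge \widetilde{EG}$ says exactly that $\az$ acts invertibly on $\pi_\bs^{C_{2^n}} Q$.

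Given this, divisibility follows at once: for each $m \geq 1$, set $\tilde w_m := \az^{-m}\tilde y$ (well defined by the previous paragraph) and $w_m := \partial(\tilde w_m)$; then
\[
\az^m w_m = \az^m \partial(\tilde w_m) = \partial(\az^m \tilde w_m) = \partial(\tilde y) = y,
\]
so $y \in D(\az, n)$. Thus the only real obstacle is the $\az$-locality of $Q$, settled by the $3 \times 3$ argument; the surrounding divisibility chase is a one-liner.
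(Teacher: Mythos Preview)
You have proved the wrong statement. In the paper's notation $\ao$ denotes $a_{\lambda_1}$, not $a_\alpha$; the class $\aal$ you work with throughout is $a_\alpha$. So your argument establishes $D(\aal,n)\subset D(\az,n)$, whereas the lemma asserts $D(\ao,n)\subset D(\az,n)$. These are different hypotheses: infinite divisibility by $a_\alpha$ and by $a_{\lambda_1}$ pick out different subsets of $\pi_\bs^{C_{2^n}}H$ in general, and nothing in your proof connects the two.

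The good news is that your method repairs immediately once you use the correct element. The gold relation $\ao\,\uz=0$ together with the fact that $\uz$ is a unit in $\pi_\bs^{C_{2^n}}H^h$ gives $\ao=0$ there (not merely $\ao^2=0$). Hence any $y\in D(\ao,n)$, being of the form $\ao z$, already dies under $H\to H^h$, and your lift to $Q=\mathrm{cof}(H\to H^h)$, the $3\times 3$ identification $Q\simeq Q\wedge\widetilde{EG}$, and the $\az$-divisibility chase all go through verbatim. With that correction your route is a legitimate alternative to the paper's: the paper works along the first row $H_h\to H\to\Hwt$, using $\ao=0$ in $H_h$ to lift $x$ to the manifestly $\az$-local $\Hwt$ and then checking the resulting $\az$-tower lies in $\ker\delta_n$; you instead work along the middle column and must invoke the $3\times 3$ lemma to see $\az$-locality of $Q$. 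The paper's version is marginally more direct, but yours packages the $\az$-periodicity and the vanishing of $\delta_n$ into a single step.
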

\begin{proof}
    As mentioned before, $\ao=0$ in $\pi^{C_{2^n}}_{\bs}H_h$. Since classes in $\pi^{C_{2^n}}_{\bs}H$ are from a quotient (also a subspace since we are working with vector spaces) of $\pi^{C_{2^n}}_{\bs}H_h$ or a subspace of $\pi^{C_{2^n}}_{\bs}\Hwt$, if we take $x\in D(\ao,n)$, it cannot come from $\pi^{C_{2^n}}_{\bs}H_h$. Thus $x$ is lifted from $\pi^{C_{2^n}}_{\bs}\Hwt$, where $x$ generates an $\az$-tower $\langle x\rangle  [\az^{\pm}]$. Again, since these classes are divisible by $\ao$, it cannot hit anything under the connecting homomorphism $\delta_n$. We deduce that if $x\in D(\ao,n)$, then $\langle x\rangle  [\az^{\pm}]\in \pi^{C_{2^n}}_{\bs}H$, thus $x\in D(\az,n)$.
\end{proof}

\begin{Lem}
$D(\ao,n)$ can be computed inductively as $D(\ao,n)=\varepsilon^*(D(\az,n-1))[\az^{\pm}]$. Here $\varepsilon^*$ is the map in (\ref{induction}) which identifies $\pi_{\bs}^{C_{2^{n-1}}}H$ as a subset of $\pi_{\bsp}^{C_{2^n}}H$ by renaming.
\end{Lem}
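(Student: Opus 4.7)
The plan is to establish the formula in two stages: first, show that $D(\ao,n)$ decomposes as an $\F[\az^{\pm}]$-module generated by its $\bsp$-slice; second, identify this $\bsp$-slice with $\varepsilon^*(D(\az,n-1))$ via the induction isomorphism.

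For the first stage, I would upgrade the conclusion of the preceding lemma, which says that every $x\in D(\ao,n)$ admits a full $\az$-tower $\langle x\rangle[\az^{\pm}]\subset\pi_{\bs}^{C_{2^n}}H$ (since $x$ must lift from $\pi_{\bs}^{C_{2^n}}\Hwt$, and $\Hwt$ is $\az$-periodic because of the model $S^{\infty\lambda_0}\simeq\widetilde{EC_{2^n}}$). I want to promote this to $\langle x\rangle[\az^{\pm}]\subset D(\ao,n)$. One direction is automatic: if $x=\ao^m y_m$ for all $m$, then $x\az^k=\ao^m(y_m\az^k)$, so $x\az^k$ is still infinitely $\ao$-divisible. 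For negative powers of $\az$, I would use that $x\az^{-1}$ (which exists in $\pi_{\bs}^{C_{2^n}}H$ by the lemma) lifts to the $\az$-periodic $\Hwt$-tower that $x$ generated, so it too is infinitely $\ao$-divisible. Grading by the $\lambda_0$-coefficient of the degree, every class in $D(\ao,n)$ then has a unique decomposition $x=\az^k\cdot x'$ with $x'\in D(\ao,n)\cap\pi_{\bsp}^{C_{2^n}}H$ and $k\in\Z$, giving $D(\ao,n)=\bigl(D(\ao,n)\cap\pi_{\bsp}^{C_{2^n}}H\bigr)\cdot[\az^{\pm}]$.

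For the second stage, I would exploit that the induction isomorphism $\varepsilon^*:\pi_{\bs}^{C_{2^{n-1}}}H\xrightarrow{\cong}\pi_{\bsp}^{C_{2^n}}H$ from (\ref{induction}) is a ring map sending $a_{\lambda_i}\mapsto a_{\lambda_{i+1}}$, $u_{\lambda_i}\mapsto u_{\lambda_{i+1}}$, and in particular $\az\mapsto\ao$. Consequently $y\in D(\az,n-1)$ iff $\varepsilon^*(y)\in\pi_{\bsp}^{C_{2^n}}H$ is infinitely divisible by $\ao$ within $\pi_{\bsp}^{C_{2^n}}H$. Since $\ao$ has no $\lambda_0$-component, dividing a class of degree orthogonal to $\lambda_0$ by $\ao$ stays orthogonal to $\lambda_0$; hence divisibility by $\ao$ inside $\pi_{\bsp}^{C_{2^n}}H$ agrees with divisibility by $\ao$ inside the larger ring $\pi_{\bs}^{C_{2^n}}H$. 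This yields $\varepsilon^*(D(\az,n-1))=D(\ao,n)\cap\pi_{\bsp}^{C_{2^n}}H$, and combining with the first stage produces the desired formula.

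The delicate point will be the first stage: verifying that $x\az^{-1}\in D(\ao,n)$ when $x\in D(\ao,n)$, since naively dividing the relations $x=\ao^m y_m$ by $\az$ requires producing divisors inside $\pi_{\bs}^{C_{2^n}}H$ rather than in a localization. This is what forces the detour through $\Hwt$ and the explicit use of $\az$-periodicity there, rather than an algebraic argument purely within $\pi_{\bs}^{C_{2^n}}H$.
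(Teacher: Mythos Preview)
Your proposal is correct and follows essentially the same approach as the paper: define the $\bsp$-slice of $D(\ao,n)$ (the paper calls it $\widebar{D(\ao,n)}$), use the $\az$-tower from the preceding lemma to get $D(\ao,n)=\widebar{D(\ao,n)}[\az^{\pm}]$, and then identify $\widebar{D(\ao,n)}=\varepsilon^*(D(\az,n-1))$ via the ring isomorphism~(\ref{induction}). Your version is more careful than the paper's about why the full $\az$-tower stays inside $D(\ao,n)$ (the paper asserts this without comment), and your observation that $\ao$-divisibility in $\pi_{\bsp}^{C_{2^n}}H$ coincides with $\ao$-divisibility in $\pi_{\bs}^{C_{2^n}}H$ for degree reasons is exactly the justification the paper leaves implicit.
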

\begin{proof}
We have already proved in the previous lemma that any $x\in D(\ao,n)$ generates an infinite $\az$-tower $\langle x\rangle  [\az^{\pm}]\in \pi^{C_{2^n}}_{\bs}H$. Let $\widebar{D(\ao,n)}\subset D(\ao,n)$ be the subset
\[
\widebar{D(\ao,n)}=\{y\in D(\ao,n)|y\, \text{does not involve the class $\az$}\}.
\]
Then $\widebar{D(\ao,n)}[\az^{\pm}]=D(\ao,n)$, and $\widebar{D(\ao,n)}=\varepsilon^*(D(\az,n-1))$.
\end{proof}

\begin{proof}(of Theorem (\ref{thm}))\\
Direct computations in the next section shows the theorem is true when $n=3$. Assume it is true for $n-1$. We prove it for $n$ as follows.

We first show that the map $\pi_{\bsp}^{C_{2^n}}H_h\to \pi_{\bsp}^{C_{2^n}}H$ is $0$. Classes on the left hand side are
\[
\pi_{\bsp}^{C_{2^n}}H_h=\F\langle \Si(\frac{\uz}{\az})^i\rangle  [\ual^{\pm},\uo^{\pm},\cdots,u_{\lambda_{n-2}}^{\pm}]\langle 1,\aal\rangle  
\]
from Corollary (\ref{perb}). They must map to classes that are infinitely divisible by $\ual,\uo,\cdots,u_{\lambda_{n-2}}$. By the induction formula (\ref{induction}), 
\begin{equation*}
    \pi_{\bsp}^{C_{2^n}}H\underset{\cong}{\xleftarrow{\varepsilon^*}} \pi_{\bs}^{C_{2^{n-1}}}H,
\end{equation*}
they translate to infinitely $\ual,\uz,\cdots,u_{\lambda_{n-3}}$-divisible classes in $\pi_{\bs}^{C_{2^{n-1}}}H$. By induction, the only infinitely $\uz$-divisible classes live in $B_2$ of part $(3)$. By an examination of degrees, as well as using divisibility and linearity, we are left to show that the maps
\begin{equation}
    \Si(\frac{\uz}{\az})^i\uo^{-i-1}\xrightarrow{?}\Si\ao^{-i}\uo^{-1}\label{perbinduction}
\end{equation}
in degrees ${(i+1)\lambda_1-3},i\geq 1$ are $0$. We show it by computing the Mackey functors $\underline{\pi_{(i+1)\lambda_1-3}^{C_{2^n}}}H=\underline{H^{3}_{C_{2^n}}}(S^{(i+1)\lambda_1};\uF),i\geq 1$. Since $S^{(i+1)\lambda_1}$ are $C_{2^n}$-CW complexes of dimension bigger than or equal to $4$, and we care only about their third cohomology, these values does not depend on $i$ and we only need to compute $\underline{H^{3}_{C_{2^n}}}(S^{2\lambda_1};\uF)$. Now we look at the top two levels of the cellular chain of fixed-point Mackey functors computing the cohomology, we have
\begin{equation*}
    \xymatrix{
    \F\ar[r]^-N\ar@/_/[d]_{1}&\F[C_{2^n}/C_2]^{C_{2^n}}\ar[r]^-{1-\gamma=0}&\F[C_{2^n}/C_2]^{C_{2^n}}\ar[r]^-{N=0}&\F[C_{2^n}/C_2]^{C_{2^n}}\ar[r]^-{1-\gamma=0}\ar@/_/[d]_{res}&\F[C_{2^n}/C_2]^{C_{2^n}}\\
    \F\ar[r]^-N\ar@/_/[u]_{0}&\F[C_{2^n}/C_2]^{C_{2^{n-1}}}\ar[r]^-{1-\gamma}&\F[C_{2^n}/C_2]^{C_{2^{n-1}}}\ar[r]^-{N=0}&\F[C_{2^n}/C_2]^{C_{2^{n-1}}}\ar[r]^-{1-\gamma}\ar@/_/[u]_{tr}&\F[C_{2^n}/C_2]^{C_{2^{n-1}}}
    }
\end{equation*}
Here $N=1+\gamma+\gamma^2+\cdots+\gamma^{2^{n-1}-1}$. We deduce $\underline{H^3}(C_{2^n}/C_{2^n})=\F\langle N\rangle  $ and $\underline{H^3}(C_{2^n}/C_{2^{n-1}})=\F\langle N\rangle  $, and $res=1,tr=0$ in the above diagram. Notice that $i_{C_{2^i}}^*\lambda_1=\lambda_1$ for $i\geq 3$, $i^*_{C_4}\lambda_1=2\alpha$ and $i^*_{C_2}\lambda_1=i^*_{e}\lambda_1=2$. Induction shows that $\underline{\pi_{2\lambda_1-3}^{C_{2^n}}}H$ is
\begin{equation*}
    \xymatrix{
    \F\ar@/_/[d]_1\\
    \F\ar@/_/[u]_0\\
    \vdots\ar@/_/[d]_1\\
    \F\ar@/_/[u]_0\ar@/_/[d]\\
    0\ar@/_/[u]\ar@/_/[d]\\
    0\ar@/_/[u]
    }
\end{equation*}
with $res^{C_{2^i}}_{C_{2^{i-1}}}=1,tr^{C_{2^i}}_{C_{2^{i-1}}}=0,i\geq 3$. Thus whether the map (\ref{perbinduction}) is $0$ or not is reduced to the $C_4$-level, which is already shown to be $0$.

Now we know that the map $\pi_{\bsp}^{C_{2^n}}H_h\to \pi_{\bsp}^{C_{2^n}}H$ is $0$. Since $\Hwt$ is $\az$-local, we get
\begin{equation}
    \pi_{\bs}^{C_{2^n}}\Hwt=(\Sigma\pi_{\bsp}^{C_{2^n}}H_h)[\az^{\pm}]\oplus \pi_{\bsp}^{C_{2^n}}H[\az^{\pm}].
\end{equation}
Here $\Sigma$ means taking the preimage under the connecting homomorphism
\[
\delta_n:\pi_{\bs}^{C_{2^n}}\Hwt\to \pi_{\bs-1}^{C_{2^n}}H_h.
\]
We use $K_n$ and $C_n$ to denote the kernel and cokernel respectively. For elements in $\pi_{\bsp}^{C_{2^n}}H_h[\az^{\pm}]$, an easy examination shows we are left with
\begin{equation}
\F\langle \uz^i\rangle  [\az][\ual^{\pm},\uo^{\pm},\cdots,u_{\lambda_{n-2}}^{\pm}]\langle 1,\aal\rangle  \in K_n,\label{posiconeandlast}
\end{equation}
and
\begin{align}
\label{firstline}
&\F\langle \Si\az^{-i}\rangle  [\ual^{\pm},\uo^{\pm},\cdots,u_{\lambda_{n-2}}^{\pm}]\langle 1,\aal\rangle  \\
\label{secondline}
\oplus&\F\langle \Si\az^{-i}\uz^{-j}\rangle  [\ual^{\pm},\uo^{\pm},\cdots,u_{\lambda_{n-2}}^{\pm}]\langle 1,\aal\rangle  
\end{align}
in $C_n$. Note classes in (\ref{secondline}) can never be hit by $\delta_n$ because of the negative powers of $\uz$. They are the first summands of $B_2$. The classes in (\ref{posiconeandlast}) splits as
\begin{equation*}
\begin{aligned}
&\F\langle \uz^i\rangle  [\az][\ual,\uo,\cdots,u_{\lambda_{n-2}}]\langle 1,\aal\rangle  \\
\oplus&\F\langle \uz^i\rangle  [\az]\frac{[\ual^{\pm},\uo^{\pm},\cdots,u_{\lambda_{n-2}}^{\pm}]}{[\ual,\uo,\cdots,u_{\lambda_{n-2}}]}\langle 1,\aal\rangle  .
\end{aligned}
\end{equation*}
The first summands together with $\pi_{\bsp}^{C_{2^n}}H[\az]$ (see the next paragraph) makes up the new positive cone for $C_{2^n}$. The second summand are the last summands in part $(4)$. Note that
\begin{equation*}
    \begin{aligned}
        \pi_{pos}^{C_{2^n}}H&=\F[\ao,\cdots,\anmt,\aal,\uo,\cdots,\unmt,\ual]/\{\text{gold relations}\}[\az]\\
        &\oplus \F\langle \uz^i\rangle  [\az][\ual,\uo,\cdots,u_{\lambda_{n-2}}]\langle 1,\aal\rangle  .
    \end{aligned}
\end{equation*}

Now we look at elements from $\pi_{\bsp}^{C_{2^n}}H[\az^{\pm}]$. Since the target of $\delta_n$ only consists of negative powers of $\az$, we have $\pi_{\bsp}^{C_{2^n}}H[\az]\in K_n$. For $\pi_{\bsp}^{C_{2^n}}H\langle \az^{-i}\rangle  $, firstly, we have the following maps
\[
[\uo,\cdots,u_{\lambda_{n-2}},\ual]\langle \az^{-i}\rangle  \langle 1,\aal\rangle  \mapsto [\uo,\cdots,u_{\lambda_{n-2}},\ual]\langle \Si\az^{-i}\rangle  \langle 1,\aal\rangle  .
\]
By the gold relation and that every spectrum in the second row of the Tate square is $\uz$-local, we see
\begin{equation}
    a_{2\alpha}=\ai=0,\quad \text{for $n-2\geq i\geq 0$}
\end{equation}
in $\pi_{\bs}^{C_{2^n}}H_h$. Thus we get the following classes in $K_n$
\begin{align}
    &\F[\ual]\langle \aal^{j}\rangle  _{j\geq 2}\langle \az^{-i}\rangle  \\
    \oplus&\langle \az^{-i}\rangle  (\frac{[\ao,\cdots,\anmt]}{\langle 1\rangle  }[\uo,\cdots,\unmt][\aal])/\{\text{gold relations}\}.
\end{align}
Here $\frac{[\ao,\cdots,\anmt]}{\langle 1\rangle  }$ denotes the augmentation ideal of the $\F$-algebra map
\[
\F[\ao,\cdots,\anmt]\to \F,\ai\mapsto 0.
\]
They are the classes in $B_1$ in part (3).

We are still left to consider part $(4)$, $B_3$ and the rest of the summands of $B_2$. By induction assumption, part $(4)$ for $C_{2^{n-1}}$ has the following $(n-2)$ summands (we renamed them through $\epsilon^*$ so that we can regard them as a subset of $\pi_{\bsp}^{C_{2^n}}H$)
\begin{equation}
    \begin{aligned}
        &\F\frac{[\ual^{\pm}]}{[\ual]}\langle \unmt^i\rangle  [\ao]\langle 1,\aal\rangle  \\
        \oplus&\F\frac{[\ual^{\pm},\unmt^{\pm}]}{[\ual,\unmt]}\langle u_{\lambda_{n-3}}^i\rangle  [\ao]\langle 1,\aal\rangle  \\
        \oplus&\cdots\\
        \oplus&\F\frac{[\ual^{\pm},\unmt^{\pm},\cdots,{u_{\lambda_2}}^{\pm}]}{[\ual,\unmt,\cdots,u_{\lambda_2}]}\langle \uo^i\rangle  [\ao]\langle 1,\aal\rangle  .
    \end{aligned}\label{B3}
\end{equation}
Since $\ao=0$ in $\pi_{\bs}^{C_{2^n}}H_h$, classes with positive powers of $\ao$ will be in $K_n$, and they are the classes that make up $B_3$. Classes in (\ref{B3}) without $\ao$, together with their counterparts in the positive cone for $C_{2^{n-1}}$ (again renamed through $\varepsilon^*$), will hit classes in (\ref{firstline}) when paired with $\az^{-i}$. The classes in (\ref{firstline}) that are killed in this way are  
\begin{equation*}
    \begin{aligned}
        &\F[\ual^{\pm}]\langle \Si\az^{-i}\rangle  \langle \unmt^i\rangle  [\ao]\langle 1,\aal\rangle  \\
        \oplus&\F[\ual^{\pm},\unmt^{\pm}]\langle \Si\az^{-i}\rangle  \langle u_{\lambda_{n-3}}^i\rangle  [\ao]\langle 1,\aal\rangle  \\
        \oplus&\cdots\\
        \oplus&\F[\ual^{\pm},\unmt^{\pm},\cdots,{u_{\lambda_2}}^{\pm}]\langle \Si\az^{-i}\rangle  \langle \uo^i\rangle  [\ao]\langle 1,\aal\rangle  .
    \end{aligned}
\end{equation*}
Now the classes $\F[\ual]\langle \az^{-i}\rangle  \langle 1,\aal\rangle  \in \pi_{pos}^{C_{2^{n-1}}}H\langle \az^{-i}\rangle  $ will also kill the corresponding classes in (\ref{firstline}) and the quotient classes are the rest summands of $B_2$. Classes in (\ref{B3}) paired with $[\az]$ make up the classes in part $(4)$.
\end{proof}

\section{More sample computations for \texorpdfstring{$n=3,4$}{n34}}
In this section, we provide the computation for $n=3,4$. The $n=3$ case is the starting case of our induction proof of theorem (\ref{thm}), thus we computed it by hand and present it here. The $n=4$ case is used to illustrate how to use our induction methods quickly to compute for bigger $n$.

\begin{Prop}
    We have
    \begin{equation}
        \begin{aligned}
            \pi_{\bs}^{C_8}H&=\F[\aal,\ao,\az,\ual,\uo,\uz]/\{\text{gold relations}\}\\
            (L2)&\oplus\F\langle \aal^{i+1}\ao^{-j}\rangle  [\ual][\az^{\pm}]\\
            (L3)&\oplus\F\langle \Si\aal^{-i}\ual^{-j}\rangle  [\ao^{\pm}][\az^{\pm}]\\
            (L4)&\oplus\F\langle \Si\ao^{-i}\uo^{-j}\rangle  [\ual^{\pm}]\langle 1,\aal\rangle  [\az^{\pm}]\\
            (L5)&\oplus\F\langle \Si\ao^{-i}\ual^{-j}\rangle  \langle 1,\aal\rangle  [\az^{\pm}]\\
            (L6)&\oplus\F\langle \aal^{i+1}\az^{-j}\rangle  [\ual]\\
            (L7)&\oplus\F\frac{[\ao]}{\langle 1\rangle  }[\uo][\aal]/\{\text{gold relations}\}\langle \az^{-j}\rangle  \\
            (L8)&\oplus\F\langle \Si\frac{1}{\az^i\uz^j}\rangle  \langle 1,\aal\rangle  [\uo^{\pm},\ual^{\pm}]\\
            (L9)&\oplus\F\langle \Si\frac{1}{\az^i\uo^j}\rangle  \langle 1,\aal\rangle  [\ual^{\pm}]\\
            (L10)&\oplus\F\langle \Si\frac{1}{\az^i\ual^j}\rangle  \langle 1,\aal\rangle  \\
            (L11)&\oplus\F\frac{[\ual^{\pm}]}{[\ual]}\langle \uo^i\rangle  \frac{[\ao]}{\langle 1\rangle  }[\az^{\pm}]\langle 1,\aal\rangle  \\
            (L12)&\oplus\F\frac{[\ual^{\pm}]}{[\ual]}\langle \uo^i\rangle  [\az]\langle 1,\aal\rangle  \\
            (L13)&\oplus\F\frac{[\uo^{\pm},\ual^{\pm}]}{[\uo,\ual]}\langle \uz^i\rangle  [\az]\langle 1,\aal\rangle  .
        \end{aligned}
    \end{equation}
\end{Prop}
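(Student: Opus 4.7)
The plan is to specialize the inductive strategy of Theorem~\ref{thm} to $n=3$, taking the $C_4$-answer (\ref{C4}) as input and reconstructing $\pi_{\bs}^{C_8}H$ from the Tate cofiber sequence $H_h\to H\to \Hwt$. Two inputs are immediate: the induction formula (\ref{induction}) transports (\ref{C4}) to a description of $\pi_{\bsp}^{C_8}H$ by renaming $\az\mapsto\ao$, $\uz\mapsto\uo$ (and keeping $\aal,\ual$), and Corollary~\ref{perb} gives
\[
\pi_{\bsp}^{C_8}H_h = \F\langle \Si(\uz/\az)^i\rangle [\ual^{\pm},\uo^{\pm}]\langle 1,\aal\rangle.
\]

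The main obstacle is the vanishing of the map $\pi_{\bsp}^{C_8}H_h\to \pi_{\bsp}^{C_8}H$; I expect this to be the only truly non-formal step. Infinite divisibility by $\ual$ and $\uo$ on the source, together with the known form of $\pi_{\bsp}^{C_8}H$, narrows the possibilities (by linearity and a degree-count) to the single family $\Si(\uz/\az)^i\uo^{-i-1}\to \Si\ao^{-i}\uo^{-1}$ in degree $(i+1)\lambda_1-3$. I will then compute $\underline{\pi_{(i+1)\lambda_1-3}^{C_8}}H = \underline{H^3_{C_8}}(S^{(i+1)\lambda_1};\uF)$ from the top two terms of the cellular chain complex (which are independent of $i$); since $i^*_{C_4}\lambda_1 = 2\alpha$, the vanishing reduces to the $C_4$-level vanishing already established in Section~3.

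With that in hand, $\az$-locality of $\Hwt$ gives the splitting $\pi_{\bs}^{C_8}\Hwt = (\Sigma\pi_{\bsp}^{C_8}H_h)[\az^{\pm}] \oplus \pi_{\bsp}^{C_8}H[\az^{\pm}]$, and the long exact sequence of the Tate cofiber presents $\pi_{\bs}^{C_8}H$ as $\ker(\delta_3)\oplus \mathrm{coker}(\delta_3)$ (an $\F$-vector space splitting, using characteristic $2$). Positive $\az$-powers lie in $K_3 = \ker(\delta_3)$ automatically: from $\pi_{\bsp}^{C_8}H_h[\az]$ they supply the $\uz$-tower completing the positive cone $(L1)$ as well as $(L12),(L13)$; from $\pi_{\bsp}^{C_8}H[\az]$ they supply the rest of $(L1)$, the $B_1$-lines $(L6),(L7)$, and $B_3 = (L11)$. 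For negative $\az$-powers, I pair $\pi_{\bsp}^{C_8}H\langle \az^{-i}\rangle$ against $(\Sigma\pi_{\bsp}^{C_8}H_h)[\az^{\pm}]$ under $\delta_3$; using $\ao = \ai = 0$ in $\pi_{\bs}^{C_8}H_h$ (which follows from $\uz$-locality of the second row of the Tate square) and the gold relations, most classes cancel in pairs, the residual kernel contributes $(L2)$--$(L5)$, and the residual cokernel contributes $B_2 = (L8),(L9),(L10)$. An explicit line-by-line bookkeeping of the thirteen summands then finishes the argument.
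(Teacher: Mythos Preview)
Your approach is correct and is exactly the method the paper intends: the paper does not write out the $n=3$ argument at all, saying only ``we computed it by hand,'' and what you describe is precisely the specialization to $n=3$ of the Tate-square argument in the proof of Theorem~\ref{thm}, with the $C_4$-answer (\ref{C4}) as input via (\ref{induction}).

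One bookkeeping slip to fix before you carry out the line-by-line tally: $(L6)$ and $(L7)$ carry $\langle \az^{-j}\rangle$, so they are \emph{not} produced by positive $\az$-powers of $\pi_{\bsp}^{C_8}H$; they arise from $\pi_{\bsp}^{C_8}H\langle\az^{-i}\rangle$ and lie in $K_3$ because $\aal^2$ and $\ao$ vanish in $\pi_{\bs}^{C_8}H_h$ (this is exactly the mechanism producing $B_1$ in the general proof). Likewise $(L2)$--$(L5)$ and $(L11)$ carry $[\az^{\pm}]$, so they are supplied by both positive and negative $\az$-powers of the $\ao$-divisible part of $\pi_{\bsp}^{C_8}H$, again in $K_3$ for the same reason. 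With those reattributions, your plan goes through verbatim.
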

In the above presentation, $(L2)-(L5)$ are part $(2)$, $(L6)-(L11)$ are part $(3)$, and $(L12)-(L13)$ are part $(4)$.

To compute for $C_{16}$, one notices that $(L2)-(L11)$, after renaming through $\varepsilon^*$, are all infinitely divisible by $\ao$ for $C_{16}$ (which is $\az$ for $C_{8}$), thus contributes to part $(2)$ of $\pi_{\bs}^{C_{16}}H$ by adding $[\az^{\pm}]$. Together with the positive cone, part $(3)$ and $(4)$, which are provided by explicit formulas, we get $\pi_{\bs}^{C_{16}}H$.

\section{Duality}
Consider the following subspace of the positive cone
\begin{equation*}
    \F[\az,\ual,\unmt,\cdots,\uz]\langle 1,\aal\rangle  .
\end{equation*}
Its direct sum with part $(4)$ is
\begin{equation}
    \begin{aligned}
        &\F[\ual]\\
        \oplus&\F[\ual^{\pm}]\langle \unmt^i\rangle  [\az]\langle 1,\aal\rangle  \\
        \oplus&\F[\ual^{\pm},\unmt^{\pm}]\langle u_{\lambda_{n-3}}^i\rangle  [\az]\langle 1,\aal\rangle  \\
        \oplus&\cdots\\
        \oplus&\F[\ual^{\pm},\unmt^{\pm},\cdots,\uo^{\pm}]\langle \uz^i\rangle  [\az]\langle 1,\aal\rangle  .
    \end{aligned}\label{dualofB2}
\end{equation}
It is not a coincidence that these classes look like the classes in $B_2$ with every element being taken its inverse and a little shift. They are related by Anderson duality. A close examination and a simple cellular calculation shows that we have
\begin{equation}
    \underline{\pi_{\lambda_0-*}^{C_{2^n}}}H=\begin{cases}
	\uF^*,\quad *=2,\\
	0,\quad \text{otherwise}.
\end{cases}
\end{equation}
with $\pi_{\lambda_0-2}^{C_{2^n}}H=\F\langle \Si\frac{1}{\az}\frac{\aal}{\ual}\rangle  $. That is, we have
\begin{equation*}
    H\uF^*\simeq \Sigma^{2-\lambda_0}H\uF.
\end{equation*}
For convenience, we denote $\Lambda=\Si\frac{1}{\az}\frac{\aal}{\ual}$.

Recall that for any injective abelian group $A$, the functor $Hom_{Ab}(-,A)$ is exact, and $X\mapsto Hom_{Ab}(\pi_{-*}(X),A)$ defines a cohomology theory on the category of $G$-spectra, thus represented by a $G$-spectrum $I_A$. The groups $\mathbb{Q}$ and $\mathbb{Q}/\mathbb{Z}$ are injective, thus we have $I_{\mathbb{Q}}$ and $I_{\mathbb{Q}/\mathbb{Z}}$. $I_{\mathbb{Z}}$ is defined by the fiber sequence
\[
I_{\mathbb{Z}}\to I_{\mathbb{Q}}\to I_{\mathbb{Q}/\mathbb{Z}}.
\]
The Anderson dual of a $G$-spectrum $E$ is defined by $I_{\mathbb{Z}}(E)=F(E,I_{\mathbb{Z}})$. We have the following result on the Anderson dual, from \cite[Prop 4.20]{Zeng17}

\begin{Prop}
For $G$-spectra $E$ and $X$, we have a short exact sequence of $RO(C_{2^n})$-graded Mackey functors
\begin{equation*}
    0\to Ext_L(E_{\bs-1}(X),\Z)\to I_{\mathbb{Z}}(E)^{\bs}(X)\to Hom_L(E_{\bs}(X),\Z)\to 0.
\end{equation*}\label{sesanderson}
\end{Prop}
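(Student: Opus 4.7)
The plan is to derive the short exact sequence via the standard universal-coefficient argument built into the construction of $I_{\mathbb{Z}}$: apply the defining fiber sequence, identify the $I_{\mathbb{Q}}$ and $I_{\mathbb{Q}/\mathbb{Z}}$ terms as $Hom$'s using injectivity, and split the resulting long exact sequence using that $0 \to \mathbb{Z} \to \mathbb{Q} \to \mathbb{Q}/\mathbb{Z} \to 0$ is an injective resolution of $\mathbb{Z}$.

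First I would apply $F(E,-)$ to the defining fiber sequence $I_{\mathbb{Z}} \to I_{\mathbb{Q}} \to I_{\mathbb{Q}/\mathbb{Z}}$ to obtain a fiber sequence of $G$-spectra $I_{\mathbb{Z}}(E) \to I_{\mathbb{Q}}(E) \to I_{\mathbb{Q}/\mathbb{Z}}(E)$. Evaluating the associated long exact sequence of $RO(C_{2^n})$-graded cohomology Mackey functors at $X$ gives
$$\cdots \to I_{\mathbb{Q}/\mathbb{Z}}(E)^{\bs-1}(X) \to I_{\mathbb{Z}}(E)^{\bs}(X) \to I_{\mathbb{Q}}(E)^{\bs}(X) \to I_{\mathbb{Q}/\mathbb{Z}}(E)^{\bs}(X) \to \cdots.$$

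Next I would identify, for $A \in \{\mathbb{Q},\ \mathbb{Q}/\mathbb{Z}\}$, the $A$-terms as $Hom$-groups. Unwinding definitions at the orbit $G/H$ gives
$$I_A(E)^{\bs}(X)(G/H) = [G/H_+ \wedge X \wedge E,\ \Sigma^{\bs} I_A]^G = Hom_{\mathbb{Z}}(\pi_{\bs}^H(X \wedge E), A) = Hom_{\mathbb{Z}}(E_{\bs}(X)(G/H), A),$$
and injectivity of $A$ makes $Hom_{\mathbb{Z}}(-,A)$ exact, so these level-wise identifications assemble into Mackey-functor isomorphisms $I_A(E)^{\bs}(X) \cong Hom_L(E_{\bs}(X), A)$ compatible with restrictions, transfers, and the Weyl action. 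Substituting, the long exact sequence becomes
$$\cdots \to Hom_L(E_{\bs-1}(X), \mathbb{Q}/\mathbb{Z}) \to I_{\mathbb{Z}}(E)^{\bs}(X) \to Hom_L(E_{\bs}(X), \mathbb{Q}) \to Hom_L(E_{\bs}(X), \mathbb{Q}/\mathbb{Z}) \to \cdots.$$

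Finally I would split the long exact sequence into short ones. Since $0 \to \mathbb{Z} \to \mathbb{Q} \to \mathbb{Q}/\mathbb{Z} \to 0$ is an injective resolution, for any Mackey functor $M$ the two-term complex $Hom_L(M, \mathbb{Q}) \to Hom_L(M, \mathbb{Q}/\mathbb{Z})$ has kernel $Hom_L(M, \mathbb{Z})$ and cokernel $Ext_L(M, \mathbb{Z})$. Applying this with $M = E_{\bs-1}(X)$ on the left and $M = E_{\bs}(X)$ on the right breaks the long exact sequence into the desired short exact sequence. The main obstacle is the Mackey-functor bookkeeping: checking that the isomorphism $I_A(E)^{\bs}(X) \cong Hom_L(E_{\bs}(X), A)$ is natural in the Mackey-functor sense (not merely at each orbit separately) and that the kernel/cokernel identification with $Hom_L$ and $Ext_L$ respects that structure. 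Once this naturality is in place, the argument is a formal $RO(C_{2^n})$-graded analog of the classical universal coefficient theorem.
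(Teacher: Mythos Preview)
The paper does not actually prove this proposition; it is quoted directly from \cite[Prop 4.20]{Zeng17} and used as a black box. Your argument is exactly the standard universal-coefficient derivation of the Anderson duality short exact sequence (apply $F(E,-)$ to the defining fiber sequence, identify the $I_{\mathbb{Q}}$ and $I_{\mathbb{Q}/\mathbb{Z}}$ terms as levelwise $Hom$'s via injectivity, and split using the injective resolution $0\to\mathbb{Z}\to\mathbb{Q}\to\mathbb{Q}/\mathbb{Z}\to 0$), which is precisely the argument given in the cited reference. Your identification of the Mackey-functor bookkeeping as the only real content is accurate, and that naturality check is routine; there is no gap.
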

Here $Ext_L,Hom_L$ are the levelwise functors. For example, the Mackey functor $Ext_L(E_{\bs-1}(X),\Z)$ is obtained by applying $Ext^1(-,\Z):Ab\to Ab$ to each level of $E_{\bs-1}(X)$ and use the contravariance in the first variable to get the restrictions and transfers.

From the easy fact that $Hom(\uF,\mathbb{Q})=\underline{0},Hom(\uF,\mathbb{Q}/{\mathbb{Z}})=\uF^*$, where $\uF^*$ is the levelwise dual of $\uF$, we get
\begin{equation*}
    I_{\Z}(H\uF)\simeq \Si H\uF^*\simeq \Sigma^{1-\lambda_0}H\uF.
\end{equation*}

In this paper we only care about the $C_{2^n}/C_{2^n}$-level of the Mackey functors in (\ref{sesanderson}). Pluging in $E=H\uF,X=S^0$, and evaluating at the $C_{2^n}/C_{2^n}$-level, we get a short exact sequence of abelian groups
\begin{equation*}
    0\to Ext^1(\pi_{\bs-1}^{C_{2^n}}H,\Z)\to \pi_{\lambda_0-1-\bs}^{C_{2^n}}H\to Hom_{Ab}(\pi_{\bs}^{C_{2^n}}H,\Z)\to 0.
\end{equation*}
Since $\pi_{\bs}^{C_{2^n}}H$ is always $2$-torsion, the last group becomes $0$ and we get an isomorphism
\begin{equation}
    Ext^1(\pi_{\bs-1}^{C_{2^n}}H,\Z)\underset{D}{\xrightarrow{\cong}} \pi_{\lambda_0-1-\bs}^{C_{2^n}}H.\label{duality}
\end{equation}
This isomorphism gives the duality between (\ref{dualofB2}) and $B_2$. For convenience, we call it $D$. We have the following readily seen duality under $D$
\begin{equation*}
    \begin{aligned}
        D(1)=\Lambda,D(\Lambda)=1, D(\aal)=\Lambda/\aal=\Si\frac{1}{\az\ual},\\
        D(\az^{k}\ual^{i_{n-1}}\unmt^{i_{n-2}}\cdots\uo^{i_1}\uz^{i_0})=\Lambda/(\az^k\ual^{i_{n-1}}\unmt^{i_{n-2}}\cdots\uo^{i_1}\uz^{i_0}),\\
        D(\aal\az^{k}\ual^{i_{n-1}}\unmt^{i_{n-2}}\cdots\uo^{i_1}\uz^{i_0})=\Lambda/(\aal\az^k\ual^{i_{n-1}}\unmt^{i_{n-2}}\cdots\uo^{i_1}\uz^{i_0}).
    \end{aligned}
\end{equation*}

\bibliographystyle{alpha}
\bibliography{bib}

\end{document}